\newcommand{\p}{\mathbf{P}}
\newcommand{\e}{\mathbf{E}}
\newcommand{\F}{{\mathscr{F}}}
\newcommand{\Z}{\mathbb{Z}}
\newcommand{\N}{\mathbb{N}}
\newcommand{\A}{\mathcal{A}}
\newenvironment{proof}{\flushleft{\bf Proof}}
\newcommand{\lb}{\left}
\newcommand{\rb}{\right}
\newcommand{\nn}{\nonumber}
\newcommand{\spa}{\hspace{1cm}}
\newcommand{\given}{\vline\mbox{ }}
\newtheorem{Theorem}{Theorem}[section]
\newtheorem{Lemma}[Theorem]{Lemma}
\newtheorem{Proposition}[Theorem]{Proposition}
\newtheorem{Remark}[Theorem]{Remark}
\newcommand{\ex}[1]{\e\lb[#1\rb]}
\newcommand{\pr}[1]{\p\lb[#1\rb]}
\newcommand{\be}[1]{\begin{equation}\label{#1}}
\newcommand{\ee}{\end{equation}}
\newcommand{\bea}[1]{\begin{eqnarray}\label{#1}}
\newcommand{\eea}{\end{eqnarray}}
\newcommand{\qed}{ \hfill   $\square$ }
\numberwithin{equation}{section}
\begin{document}
\title{Asymptotic behaviour of the critical value for the  contact process with rapid stirring}
\author{Roman Berezin$^1$
\hspace*{3cm}
Leonid Mytnik$^1$\\
\mbox{}\\
\mbox{}\\
Faculty of Industrial Engineering and Management \\ Technion --- Israel Institute of Technology, \\ Haifa 32000, Israel
}

\date{}

\maketitle
\begin{abstract}We study the
behaviour of the contact process
with rapid stirring on the lattice $\Z^d$ in dimensions  $d\geq 3$.
This process was studied earlier by Konno and Katori, who proved
results for the speed of convergence of the critical value
as the rate of stirring approaches infinity. 
 In this article we improve  the results of Konno and Katori and establish  the sharp asymptotics of the critical value in dimensions $d\geq 3$.
\end{abstract}

\footnotetext[1]{Supported by the ISF grant 497/10.}

{\em Key words and phrases.}  Asymptotic behaviour, Contact processes, Rapid stirring, Interacting particle systems

AMS 2000 {\em subject classifications}
Primary 82C22 60K35 ; Secondary 60J80.



\newpage

\section{Introduction and the main result}
In this paper, we are going to study the behaviour of the so-called contact process with rapid stirring
(see \cite{Kat}, \cite{Kon}). The process is also known as
a contact process combined with an exclusion process (see \cite{Lgt1,Lgt2}).
\medskip
\newline DeMasi, Ferrari and Lebowitz (see \cite{MFL}) studied interacting
particle systems on a lattice under the combined influence of spin
flip and simple exchange dynamics. They proved that when the
stirrings occur on a fast time scale of order $\varepsilon^{-2}$ the
macroscopic density, defined on spatial scale $\varepsilon^{-1}$,
evolves according to an autonomous nonlinear diffusion-reaction
equation. Using the
connection between a convergent sequence of such particle systems to
a solution of a reaction-diffusion equation, found by DeMasi,
Ferrari and Lebowitz, Durrett and Neuhauser (in \cite{DN}) proved results about the
existence of phase transitions when the stirring rate is large that
apply to many different systems.
\medskip
\newline
The contact process with rapid stirring  was
studied by Konno (see \cite{Kon}), who
described it via a
system of interacting particles, on a lattice
$\Z^d$ (while for the proofs the process on the rescaled lattice $\Z^d/\sqrt{N}$ was considered).
The state of the process at time $t$ is given by a function $\xi_t^N:\Z^d\rightarrow\{0, 1\}$, where the value of $\xi_t^N(x)$ is determined by the number of particles present at $x$
at time $t$.
In this setting particles die at rate $1$,
and give birth, onto the closest neighbouring sites, at rate
$\lambda$. In addition, 
values of $\xi^N_t$ at two neighbouring sites are
exchanged at rate $N$ (stirring rate), and all the above mechanisms are independent.
The primary goal of \cite{Kon} was to improve the result of Durrett and
Neuhauser, who showed the following:

\begin{Theorem}[Durrett, Neuhauser, \cite{DN}]\label{DNTheorem} Let $\{\xi_{t}^N\}_{t\geq 0}$ be the set-valued contact process with stirring, with
the dynamics as described above, starting with a single occupied
site at the origin. Let
$\Omega_\infty=\{\xi_{t}^N\not=\emptyset\mbox{ }\forall t\geq0\}$,
$\rho_\lambda^N=\pr{\Omega_\infty}$, and let
\[\lambda_c(N)=\inf\{\lambda\geq0:\rho_\lambda^N>0\}.\]\index{$\lambda_c$}
As $N\rightarrow \infty$,
\begin{description}
  \item[a.] $\lambda_c(N)\rightarrow 1.$
  \item[b.] If $\lambda>1$, then $\rho_\lambda^N\rightarrow
  (\lambda-1)/\lambda$.
\end{description}
\end{Theorem}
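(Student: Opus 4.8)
The plan is to run the two-step program that is by now standard for rapidly stirred particle systems: identify the hydrodynamic (reaction--diffusion) limit, then use a block construction to transfer qualitative features of the limiting PDE back to the particle system for $N$ large. On the rescaled lattice $\Z^d/\sqrt N$ with reaction rates of order one the process falls exactly in the regime of De Masi, Ferrari and Lebowitz, so its empirical density converges as $N\to\infty$ to the solution $u(t,x)$ of $\partial_t u=\Delta u+F(u)$, where $F(u)$ is the contact flip rate averaged against the product Bernoulli$(u)$ measure (the local equilibrium of fast stirring); a short computation gives $F(u)=\lambda u(1-u)-u=(\lambda-1)u-\lambda u^2$, with roots $0$ and $u^\ast:=(\lambda-1)/\lambda$ in $[0,1]$ and $0$ unstable exactly when $\lambda>1$. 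Two structural facts are used throughout: the process is attractive, with a basic coupling coming from the graphical representation (stirring events are monotone swaps); and it is self-dual, since under time reversal the graphical representation --- recovery marks, birth arrows, and undirected stirring events --- has the same law. In particular $\rho^N_\lambda$ equals the density $\bar\rho^N$ of the upper invariant measure, and $t\mapsto\e[\xi^{N,\mathrm{full}}_t(0)]$ decreases to $\bar\rho^N$.

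Two of the four estimates are then soft. Since stirring conserves particles, $\tfrac{d}{dt}\e|\xi^N_t|\le(\lambda-1)\e|\xi^N_t|$ with no dependence on $N$, so $\e|\xi^N_t|\le e^{(\lambda-1)t}|\xi^N_0|\to0$ when $\lambda<1$; hence $\rho^N_\lambda=0$ for every $N$ when $\lambda<1$, i.e.\ $\lambda_c(N)\ge1$ for all $N$, the $\liminf$ half of (a). For the upper bound in (b), fix $\epsilon>0$ and choose $T_0$ so large that the solution $\bar v$ of $\bar v'=F(\bar v)$, $\bar v(0)=1$ --- which decreases to $u^\ast$ --- satisfies $\bar v(T_0)\le u^\ast+\epsilon$. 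By monotonicity $\e[\xi^{N,\mathrm{full}}_{T_0}(0)]\ge\bar\rho^N$, and the left side converges as $N\to\infty$ to the spatially homogeneous hydrodynamic profile $\bar v(T_0)$; thus $\limsup_N\bar\rho^N\le u^\ast+\epsilon$, and letting $\epsilon\downarrow0$ we get $\limsup_N\rho^N_\lambda\le u^\ast$.

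The substance is the matching lower bound $\liminf_N\rho^N_\lambda\ge u^\ast$, which also delivers survival, hence $\lambda_c(N)\le\lambda$ for $N$ large and so $\limsup_N\lambda_c(N)\le1$, completing (a). Fix $\lambda>1$ and small $\epsilon>0$. The nonlinearity $F$ is of KPP type with $F'(0)=\lambda-1>0$, so by the hair-trigger theorem of Aronson and Weinberger the PDE started from any nonnegative, nonzero, compactly supported datum converges to $u^\ast$ locally uniformly; concretely, from the datum $(u^\ast-\epsilon)\mathbf 1_{\{|x|\le L\}}$ the solution exceeds $u^\ast-2\epsilon$ on $\{|x|\le 3L\}$ at some fixed time $T=T(L,\epsilon)$ once $L$ is large. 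This is fed into a block construction: one builds a renormalized space--time lattice whose sites carry a box of side $\sim L\sqrt N$, calls a renormalized site \emph{open} when the configuration on its box, after stirring has locally equilibrated, dominates a Bernoulli$(u^\ast-3\epsilon)$ field, and shows --- coupling from below by attractiveness with the process started from such a field, invoking the De Masi--Ferrari--Lebowitz convergence, the PDE estimate above, and a quantitative local-equilibrium statement --- that a renormalized site is open at level $n+1$ with conditional probability at least $1-\delta(N)$, $\delta(N)\to0$, whenever the relevant neighbour at level $n$ is open, with finite-range dependence. This is precisely the hypothesis for comparison with a supercritical oriented percolation, so for $N$ large the open cluster started from full occupancy survives and has density close to $1$; since on open boxes the particle density is at least $u^\ast-3\epsilon$, we obtain $\bar\rho^N\ge u^\ast-4\epsilon$ for $N$ large, and $\epsilon\downarrow0$ gives $\liminf_N\rho^N_\lambda\ge u^\ast$. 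Combined with the previous paragraph this proves $\rho^N_\lambda\to(\lambda-1)/\lambda$.

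The genuinely delicate point is the transfer step inside the block construction. The De Masi--Ferrari--Lebowitz limit controls the empirical density only in an averaged, in-probability sense on the diffusive scale, whereas the percolation comparison needs a \emph{uniform} lower bound on the local density throughout a macroscopic box, and needs it starting from the random, merely stochastically dominated configuration left by the previous block. Making this rigorous requires a quantitative local-equilibrium estimate (fast stirring restores approximate product structure on time scales short compared with $T$), systematic use of attractiveness to replace awkward initial data by a clean Bernoulli field from below, and enough control on the spreading speed of the PDE front that the good region genuinely overlaps the neighbouring boxes; arranging all of this so that the exceptional probabilities $\delta(N)$ beat the critical threshold for oriented percolation is where the real work lies.
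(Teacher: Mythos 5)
Theorem~\ref{DNTheorem} is quoted from Durrett and Neuhauser~\cite{DN} as background; the present paper does not prove it, so there is no internal proof to compare your attempt against. That said, your sketch is a faithful reconstruction of the argument actually used in~\cite{DN}: the hydrodynamic limit of De Masi--Ferrari--Lebowitz gives $\partial_t u=\Delta u+F(u)$ with the reaction term $F(u)=(\lambda-1)u-\lambda u^2$ obtained by averaging the flip rates against product Bernoulli$(u)$; conservation of mass under stirring gives the cheap bound $\lambda_c(N)\geq 1$ uniformly in $N$; attractiveness, self-duality (the graphical representation with symmetric birth arrows and symmetric stirring swaps is invariant in law under time reversal, so $\rho^N_\lambda$ equals the density of the upper invariant measure), and hydrodynamics started from full occupancy give $\limsup_N\rho^N_\lambda\leq u^*$; and the hard direction $\liminf_N\rho^N_\lambda\geq u^*$ --- which also yields survival and hence $\limsup_N\lambda_c(N)\leq 1$ --- proceeds via the Aronson--Weinberger hair-trigger result for the KPP nonlinearity, a renormalized block construction, and comparison with supercritical $M$-dependent oriented percolation. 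You have correctly identified the genuinely delicate point, namely the quantitative local-equilibrium estimate needed to pass from the averaged, in-probability hydrodynamic control to a uniform lower bound on the density throughout a macroscopic box starting from the random configuration left by the previous block, and you have not pretended to resolve it. As a sketch the proposal is sound and matches the original route; it is, of course, not a self-contained proof, but that is precisely the status the theorem has in the paper you are reading, which only uses it as a black box.
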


Konno used the methods of \cite{BDS} to get a more detailed picture
of the critical value $\lambda_c(N)$ as the stirring rate $N$
approaches infinity. The main result of~\cite{Kon} can be stated as follows.

\begin{Theorem}[Konno, \cite{Kon}]\label{KTheorem} For all $x>0$, let
\[
\varphi_d(x)=\left\{
  \begin{array}{ll}
    1/x^{1/3}, & d=1, \\
    \log{x}/x, & d=2, \\
    1/x, & d\geq3.
  \end{array}
\right.
\] Then we have \[\lambda_c(N)-1\approx C_\star\varphi_d(N)\]
where $\approx$ means that if $C_\star$ is small (large) then the
right-hand side of the above is a lower (upper) bound of the
left-hand side for large enough $N$.
\end{Theorem}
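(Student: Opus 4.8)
Write $\delta = \lambda - 1$. I would prove the upper and lower sides of the estimate separately, following the perturbative philosophy of \cite{BDS,DN}: the process is a finite-$N$ perturbation of its hydrodynamic limit, which is the mean-field reaction--diffusion equation $\partial_t u = \Delta u + \delta u - \lambda u^2$ (DeMasi--Ferrari--Lebowitz, see \cite{MFL}), and the whole point is to locate the $O(\varphi_d(N))$ correction to the equation's critical reaction parameter $\delta = 0$. It is convenient to work on the rescaled lattice $\Z^d/\sqrt N$, where the stirring becomes a rate-$\Theta(N)$ random walk with diffusive scaling.

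\emph{Extinction when $\delta\le c_\star\varphi_d(N)$.} Differentiating the first moment gives the exact identity
\[
\frac{d}{dt}\ex{|\xi_t^N|} = \delta\,\ex{|\xi_t^N|} - c_d\,\lambda\,\ex{P_t^N},
\]
where $P_t^N$ is the number of occupied nearest-neighbour pairs and $c_d>0$ is a dimensional constant; equivalently $\ex{|\xi_t^N|} = \exp\!\big(\int_0^t(\delta - c_d\lambda R_s^N)\,ds\big)$ with $R_s^N := \ex{P_s^N}/\ex{|\xi_s^N|}$ the mean number of occupied neighbours per particle. The heart of the lower bound is a uniform estimate $R_s^N \ge c\,\varphi_d(N)$ for all $s\ge 1$. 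One gets this by keeping only those pairs consisting of a particle and an offspring it produced in $[s-1,s]$ with both still alive and still adjacent at time $s$: since $\ex{|\xi_u^N|}\ge \tfrac12\ex{|\xi_s^N|}$ on $[s-1,s]$ (as $\delta$ is small), this yields $\ex{P_s^N}\gtrsim \ex{|\xi_s^N|}\int_0^1 e^{-2r}q_r^{(N)}\,dr$, where $q_r^{(N)}$ is the probability that two particles that were neighbours at time $0$ are neighbours at time $r$. Splitting this integral at time $1/N$ gives $\Theta(1/N)$ for $d\ge 3$ and $\Theta(\log N/N)$ for $d=2$; for $d=1$ it is only $\Theta(N^{-1/2})$, and one must instead track the whole local descendant cloud rather than a single parent--offspring bond — being near-critical, the cloud lives for time $\asymp\delta^{-1}$, and balancing its self-overlap against $\delta$ produces the exponent $N^{-1/3}$. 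With $R_s^N\ge c\varphi_d(N)$ in hand, $\tfrac1t\int_0^t R_s^N\,ds\ge \tfrac12 c\,\varphi_d(N)$ for $t\ge 2$, so if $c_\star < \tfrac{c}{2c_d\lambda}$ then $\ex{|\xi_t^N|}\to 0$ exponentially and $\pr{\Omega_\infty}\le \pr{\xi_t^N\ne\emptyset}\le\ex{|\xi_t^N|}\to 0$ by Markov's inequality, giving $\lambda_c(N)-1\ge c_\star\varphi_d(N)$.

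\emph{Survival when $\delta\ge C_\star\varphi_d(N)$.} Here I would run the block construction of \cite{DN}. Fix, in rescaled coordinates, a spatial box of side $L$ and a time $T$, both large compared with the reaction time scale $\delta^{-1}$ and the diffusive length, and show that if a box carries a density $\asymp\delta$ of particles then, with probability at least $1-\gamma$, so do the two space--time-neighbouring boxes of the comparison graph one step later. The three ingredients are: (i) the hydrodynamic comparison, keeping the empirical density within $o(\delta)$ of the solution $u$ of the equation above; (ii) a PDE estimate — a KPP-type equation started from a bump of mass $\asymp\delta L^d$ rises to the stable value $\approx \delta/\lambda$ and its front advances at a positive speed, via comparison with a travelling-wave subsolution; and (iii) fluctuation control (the DeMasi--Ferrari--Lebowitz martingale machinery together with a concentration estimate), which is the delicate input because the relative fluctuations must be pushed below the $O(\delta)$ density, forcing the number of particles per block, hence $L$ and $N$, to be large. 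Making $\gamma$ smaller than the threshold of $1$-dependent oriented percolation then gives $\rho_\lambda^N>0$, and seeding a good box from a single particle (with probability bounded below, conditionally on early survival) transfers this to the set-valued process; tracing the mutual constraints among $L$, $T\asymp\delta^{-1}$ and the parabolic scaling reproduces the threshold $\delta\asymp\varphi_d(N)$, so $\lambda_c(N)-1\le C_\star\varphi_d(N)$ for $C_\star$ large.

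\emph{Main obstacle.} The crux is the extinction side, specifically the uniform lower bound on the pair correlation $R_s^N$: one must show that related particles genuinely sit next to each other at rate $\asymp\varphi_d(N)$ per particle despite the exclusion interaction with the rest of the population, which is precisely where the method of \cite{BDS} is needed, and — in $d=1$ — this cannot be carried out one genealogical bond at a time, so the near-critical descendant cloud must be analysed as a (discrete approximation to a) super-Brownian-type object. The block construction for the upper bound is technically heavy but conceptually routine once the PDE front estimate and the fluctuation bounds are available.
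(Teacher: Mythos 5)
This theorem is not proved in the paper at all — it is Konno's result, cited from \cite{Kon} and stated purely as background to be refined by Theorem~\ref{C3}; there is no internal proof to compare your sketch against. What the paper does reveal about the method is, however, consistent with your outline. It records that Konno followed \cite{BDS}, and in its own argument for Proposition~\ref{L111} it imports exactly the first-moment identity you write down (equation~(1.5) of \cite{Kon}, i.e.\ $\tfrac{d}{dt}m_t^N=(\lambda-1)m_t^N-\tfrac{1}{2d}I_t^N$) together with an indicator $Z_\alpha$ for a recent split whose two children are still alive and adjacent, and a local-time estimate for the relative walk ($V^N$, $W^N$ in Lemma~\ref{RWalks}). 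That is structurally your step $R_s^N\gtrsim\int_0^1 e^{-2r}q_r^{(N)}\,dr$, and the case split at $r\asymp N^{-1}$ correctly returns $N^{-1}$, $\log N/N$, $N^{-1/2}$ for $d\ge3,2,1$. For the survival direction the paper points to the block construction of \cite{DN}, as you do. So extinction via the moment/pair-correlation equation and survival via the hydrodynamic comparison plus $1$-dependent oriented percolation is the right blueprint.

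Two remarks on the sketch itself. First, your inequality $\ex{|\xi_u^N|}\ge\tfrac12\ex{|\xi_s^N|}$ for $u\in[s-1,s]$ is fine as stated — it only needs $\tfrac{dm}{dt}\le\delta m$, hence $m(s)\le e^{\delta}m(u)$ — but note you only use it for $u\le s$; no control on decrease of $m$ is required. Second, and more importantly, you correctly identify that the single-bond argument gives $N^{-1/2}$ rather than $N^{-1/3}$ in $d=1$, and your balancing heuristic (cloud alive for time $\asymp\delta^{-1}$, spatial spread $\asymp\sqrt{N/\delta}$, hence self-density $\asymp(N\delta)^{-1/2}$, set equal to $\delta$) does land on $\delta\asymp N^{-1/3}$; this multi-generational cloud analysis is precisely the hard content of \cite{BDS} that Konno had to transport, and you are right to flag it as the crux. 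The survival side is sketched more roughly (in particular the seeding step from $\delta_0$ to a density-$\delta$ block is only gestured at), but as a blind reconstruction of a result the paper only cites, this is a sound high-level account.
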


Moreover, 
Theorem \ref{KTheorem} was refined in
dimensions  $d\geq 3$:
it was shown in~\cite{Kat} that  \bea{konno1}
\frac{1}{(2d)(2d-1)}&\leq&\liminf_{N\rightarrow\infty}N\lb(\lambda_c(N)-1\rb)\\\nn&\leq&\limsup_{N\rightarrow\infty}N\lb(\lambda_c(N)-1\rb)\leq\frac{G(0,0)-1}{2d},
\eea

where $G(\cdot,\cdot)$ is the Green's function for the simple random
walk on $\Z^d$. \newline

The main goal of this article is to show that, in fact,
the lower bound in (\ref{konno1}) can be improved to the value  $\frac{G(0,0)-1}{2d}$. By this, we get the sharp asymptotics of the critical value $\lambda_c(N)$.
Before we state our main result, we need another piece of notation. Let $\eth_d(x)=\{y\in\Z^d:\|y-x\|_1=1\}$ denote "the
neighbourhood of $x$ in $\Z^d$, excluding $x$". Then $\eth_d\equiv\eth_d(0)$ will denote the neighbourhood of the origin.\newline

The main result of this article is
\begin{Theorem}\label{C3} Let $d\geq 3$. Then
\[\lambda_c(N)-1\sim \frac{\vartheta}{N}, \mbox{ as }
N\rightarrow\infty,\] where
\be{vartheta}\vartheta=\frac{1}{4d^2}\sum_{n=1}^\infty\pr{V_n\in\eth_d},\ee
 $\{V_n\}_{n\geq 0}$ is a symmetric random walk on $\Z^d$ starting at the origin, and $\sim$ means the ratio approaches $1$, as $N$ approaches $\infty$.
\end{Theorem}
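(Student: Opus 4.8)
\medskip
\noindent\textbf{Strategy and reduction.} Since \cite{Kat} already gives $\limsup_{N\to\infty}N(\lambda_c(N)-1)\le (G(0,0)-1)/(2d)$, and since
\[\vartheta=\frac{1}{4d^2}\sum_{n=1}^\infty\pr{V_n\in\eth_d}=\frac{1}{4d^2}\sum_{z\in\eth_d}G(0,z)=\frac{G(0,0)-1}{2d}\]
(the middle equality because $\sum_{n\ge1}\pr{V_n=z}=G(0,z)$, the last because $G(0,0)-\tfrac1{2d}\sum_{z\in\eth_d}G(0,z)=1$), it suffices to prove the matching lower bound $\liminf_{N\to\infty}N(\lambda_c(N)-1)\ge\vartheta$. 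Fix $\varepsilon>0$ and put $\lambda=1+(\vartheta-\varepsilon)/N$; the goal is to show $\rho_\lambda^N=0$ for all large $N$, which, since $\rho_\lambda^N\le\lim_{t\to\infty}\ex{|\xi_t^N|}$, follows once $\ex{|\xi_t^N|}\to0$ as $t\to\infty$.

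\medskip
\noindent\textbf{The Lyapunov functional and the constant $\vartheta$.} The point is to replace $|\xi|$ by a functional that downweights clustered configurations exactly as much as clustering costs in wasted births. I would take
\[\Phi(\xi)=|\xi|-\sum_{\{x,y\}\subseteq\xi}w(x-y)\qquad\text{(unordered pairs)},\]
where $w\ge0$ is a finite‑range truncation (to $\|v\|_1\le R_N$, $R_N\to\infty$, $R_N=o(N^{1/d})$) of the Green kernel $w(v)=\frac{\lambda}{4d^2N}\sum_{z\in\eth_d}G(v,z)$. In every dimension $d\ge3$ one has $\sum_{\|v\|_1\le R}G(0,v)=O(R^2)$, so $\|w\|_1=O(R_N^2/N)=o(1)$ and hence $\tfrac12|\xi|\le\Phi(\xi)\le|\xi|$ for all large $N$. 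Writing out how $\ex{\Phi(\xi_t^N)}$ evolves, deaths contribute $-|\xi|+2\sum_{\{x,y\}}w(x-y)$, births contribute $(\lambda-1)|\xi|-\lambda\,(Pw)(0)\,|\xi|-\tfrac\lambda d\sum_{\{x,y\}}\mathbf 1_{\eth_d}(x-y)-2\lambda\sum_{\{x,y\}}(Pw)(x-y)$ (with $P$ simple random walk averaging, and $(Pw)(0)=w(z_0)$ for any $z_0\in\eth_d$ by symmetry), and stirring contributes $-4dN\sum_{\{x,y\}}\big((P-I)w\big)(x-y)$, all up to error terms involving triples of particles and the exclusion constraint. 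The kernel $w$ is chosen so that $-4dN(P-I)w=\tfrac\lambda d\mathbf 1_{\eth_d}$, i.e. $(I-P)w=\tfrac{\lambda}{4d^2N}\mathbf 1_{\eth_d}$, which kills the $\mathbf 1_{\eth_d}$ term and leaves a two‑body remainder that is $O(1/N)$ pointwise. The coefficient of $|\xi|$ is then $(\lambda-1)-\lambda w(z_0)+O(1/N^2)$, and using the identity $\sum_{z\in\eth_d}G(z_0,z)=2d(G(0,0)-1)$ for $z_0\in\eth_d$ — a short computation from harmonicity of $G(0,\cdot)$ off the origin together with the lattice symmetries — we get $w(z_0)=\tfrac{\lambda(G(0,0)-1)}{2dN}=\lambda\vartheta/N$, so that coefficient equals $(\lambda-1)-\vartheta/N+O(1/N^2)=-\varepsilon/N+O(1/N^2)<0$ for $N$ large.

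\medskip
\noindent\textbf{The main obstacle: error control and local sparseness.} The formal computation must be upgraded to a genuine inequality of the form $\mathscr L\Phi(\xi)\le-\tfrac{\varepsilon}{2N}|\xi|$, where $\mathscr L$ is the generator of $\xi_t^N$; this is the heart of the proof. The surviving two‑body remainder and the triple/exclusion error terms are all $\lesssim\frac{C}{N}\#\{\{x,y\}\subseteq\xi:\|x-y\|_1\le R_N\}$, which is negligible against $\frac{\varepsilon}{N}|\xi|$ only when the configuration is \emph{locally sparse} — roughly, when the number of occupied edges and of close pairs is $O(|\xi_t^N|/N)$ rather than $O(|\xi_t^N|)$. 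Establishing this is where the real work lies: one must carry along a companion (next‑order) estimate on the two‑point function — in effect closing the correlation hierarchy one level deeper, or running a bootstrap — together with a domination from above by the branching random walk $\eta_t$ obtained by erasing the exclusion rule, a linear birth–death process with $\ex{|\eta_t|}=e^{(\lambda-1)t}$ and extinction probability $1/\lambda$, so that $\pr{\sup_t|\xi_t^N|\ge\delta N}=O(1/N)$ when started from a single particle. This is precisely the step where the coarser weight implicit in \cite{Kon},\cite{Kat} (supported on $\eth_d$ alone rather than on the full Green kernel) loses the constant, giving only $1/(2d(2d-1))$.

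\medskip
\noindent\textbf{Conclusion.} Granting the generator inequality on the event that $\xi_t^N$ stays locally sparse and of size $\le\delta N$, let $\tau$ be the first failure time; stopping the exponential as well, $e^{\varepsilon(t\wedge\tau)/(2N)}\Phi(\xi_{t\wedge\tau}^N)$ is a nonnegative supermartingale, whence $\ex{\Phi(\xi_t^N)\mathbf 1_{\tau>t}}\le e^{-\varepsilon t/(2N)}$ and $\ex{|\xi_t^N|\mathbf 1_{\tau>t}}\to0$ as $t\to\infty$. Combining this with $\pr{\tau<\infty}=O(1/N)$ from the branching‑walk comparison, and a renewal/restart argument to absorb the rare, short‑lived high‑density excursions contributing to $\{\tau<\infty\}$ (this last point is delicate and occupies part of the argument), one obtains $\ex{|\xi_t^N|}\to0$ as $t\to\infty$, hence $\rho_\lambda^N=0$, hence $\lambda_c(N)\ge1+(\vartheta-\varepsilon)/N$ for all large $N$. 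Letting $\varepsilon\downarrow0$ gives $\liminf_{N\to\infty}N(\lambda_c(N)-1)\ge\vartheta$, and with the upper bound of \cite{Kat} this proves Theorem~\ref{C3}. The main obstacle, as indicated, is the rigorous control of the non‑two‑body error terms via propagation of local sparseness.
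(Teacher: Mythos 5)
Your reduction to the lower bound is the same as the paper's (Lemma~\ref{thetaG} plus the upper bound in~(\ref{konno1})), and your identity $\vartheta=(G(0,0)-1)/2d$ is verified correctly. But the route you propose for the lower bound is quite different from the paper's, and as written it has a genuine gap at exactly the point you yourself flag as ``the heart of the proof.'' Your Lyapunov functional $\Phi(\xi)=|\xi|-\sum_{\{x,y\}}w(x-y)$ with $(I-P)w=\tfrac{\lambda}{4d^2N}\mathbf 1_{\eth_d}$ does cancel the first-order wasted-births term and leaves $O(1/N)$ two-body and three-body remainders, but turning the formal generator computation into $\mathscr L\Phi(\xi)\le -\tfrac{\varepsilon}{2N}|\xi|$ requires precisely the assertion that close pairs are rare (``local sparseness propagates''), and the companion two-point-function bootstrap, the BRW domination with $\pr{\sup_t|\xi_t^N|\ge\delta N}=O(1/N)$, and the ``renewal/restart argument to absorb high-density excursions'' are all left unproven. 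These are not side technicalities; they are the entire difficulty, and you explicitly defer them (``Granting the generator inequality \ldots''). So the proposal is a plausible strategy, not a proof.

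The paper's actual argument avoids the need for any local-sparseness input. It works directly with Konno's integral equation $\hat m_t^N=1+\theta\int_0^t\hat m_s^N ds-\tfrac{N}{2d}\int_0^t\hat I_s^N ds$ for the first moment of the time-sped-up process, and it \emph{lower-bounds} the pair term $\hat I_s^N$ by restricting to pairs whose most recent common ancestor branched within the short window $\tau_N=\ln N/N^2$. This reduces everything to the single-pair quantity $Z_1$ and its limit $N\e[Z_1]\to d\vartheta$ (Lemma~\ref{L661}), established by a clean random-walk computation coupling the difference walk $W^N$ to a free walk $V^N$ (Lemma~\ref{RWalks}). The only error term, $\zeta_\alpha$, accounting for a child killed on arrival, is fed back into $\hat I$ itself via~(\ref{Xi1})--(\ref{Xi2}), and then Gr\"onwall closes the loop. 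Thus the paper replaces your ``propagation of sparseness + supermartingale + restart'' machinery with a self-referential inequality for $\hat m_t^N$ in which the error is absorbed by the very quantity being bounded. If you want to salvage your approach, you would need to actually prove the two-point function estimate and the excursion control; as it stands, that part of the argument is missing, whereas the paper's route never needs it.
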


To connect the result in the theorem with (\ref{konno1}) let us state a simple lemma.
\begin{Lemma}\label{thetaG}
\[ 2d\vartheta=G(0,0)-1.\]
\end{Lemma}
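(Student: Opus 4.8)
\textbf{Proof plan for Lemma~\ref{thetaG}.}
The plan is to reduce the identity to the elementary first--step (renewal) decomposition of the Green's function. First I would use symmetry to replace the sum over the whole neighbourhood $\eth_d$ by $2d$ copies of a single term. Since $\{V_n\}_{n\ge0}$ is a symmetric nearest--neighbour random walk started at the origin, its law is invariant under the lattice symmetries fixing $0$ (coordinate reflections and permutations), so $\pr{V_n=y}$ does not depend on which $y\in\eth_d$ is chosen; fix one such neighbour $e$. Then $\pr{V_n\in\eth_d}=\sum_{y\in\eth_d}\pr{V_n=y}=2d\,\pr{V_n=e}$, whence
\[
2d\,\vartheta=\frac{1}{2d}\sum_{n=1}^{\infty}\pr{V_n\in\eth_d}
=\sum_{n=1}^{\infty}\pr{V_n=e}
=\sum_{n=0}^{\infty}p^{(n)}(0,e)=G(0,e),
\]
where $p^{(n)}$ denotes the $n$--step transition probability of the simple random walk, we used $p^{(0)}(0,e)=0$ (as $e\neq 0$), and the series converges because $d\ge3$ makes the walk transient.

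It then remains to check that $G(0,e)=G(0,0)-1$. For this I would invoke the first--step decomposition $G(x,y)=\delta_{xy}+\sum_{z}p(x,z)\,G(z,y)$, valid for the finite Green's function of a transient walk, where $p(x,z)=\frac{1}{2d}\mathbf{1}_{\{\|x-z\|_1=1\}}$; this is just $\sum_{n\ge0}p^{(n)}=I+\sum_{n\ge1}p^{(n)}$ written out. Evaluating at $(0,0)$ gives
\[
G(0,0)=1+\sum_{z}p(0,z)\,G(z,0)=1+\frac{1}{2d}\sum_{z\in\eth_d}G(z,0).
\]
By translation invariance and the same symmetry as above, $G(z,0)=G(0,z)=G(0,e)$ for every $z\in\eth_d$, so the right--hand side equals $1+G(0,e)$. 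Combining this with the previous display yields $2d\,\vartheta=G(0,e)=G(0,0)-1$.

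I do not expect any genuine obstacle here; the substance is the renewal identity $G=I+pG$ together with the rotational symmetry of the simple random walk. The only points deserving a line of justification are the finiteness of $G(0,0)$ (which holds since $d\ge3$ and which legitimises rearranging the series), and the convention that $\{V_n\}$ is normalised to be exactly the simple random walk whose Green's function is $G$.
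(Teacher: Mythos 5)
Your argument is correct, but it takes a slightly different route from the paper's one-line proof. The paper decomposes on the \emph{last} step: by the Markov property, $\pr{V_n=0}=\frac{1}{2d}\pr{V_{n-1}\in\eth_d}$ for $n\ge1$, so summing over $n$ gives $G(0,0)-1=\frac{1}{2d}\sum_{n\ge0}\pr{V_n\in\eth_d}=\frac{1}{2d}\sum_{n\ge1}\pr{V_n\in\eth_d}=2d\vartheta$ directly (the $n=0$ term vanishes because $V_0=0$). You instead use the lattice symmetry of the simple random walk to rewrite $\frac{1}{2d}\sum_{n\ge1}\pr{V_n\in\eth_d}$ as $G(0,e)$ for a fixed neighbour $e$, and then the \emph{first}-step renewal identity $G=I+pG$ together with symmetry again to show $G(0,0)=1+G(0,e)$. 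Both arguments are elementary applications of the Markov property plus the rotational symmetry of the walk; the paper's is more compact because it never needs to introduce $G(0,e)$ as an intermediate quantity, while yours makes the link to the standard Green's function renewal equation explicit, which some readers may find more transparent. Either way the convergence issue you flag is handled by transience in $d\ge3$, exactly as you say.
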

\begin{Remark}
The lemma implies that our sharp asymptotics for the critical value  coincides with the upper bound for the critical  value in (\ref{konno1}).
\end{Remark}
\begin{proof}{\bf\ of Lemma~\ref{thetaG}}
By the Markov property
\bea{asd1}\nn G(0,0)&=&1+\frac{1}{2d}\sum_{n=1}^\infty\pr{V_{n-1}\in\eth_d}\\\nn&=&1+\frac{1}{2d}\sum_{n=1}^\infty \pr{V_n\in\eth_d},\eea where the last line follows since $V_0=\bf{0}$.
\qed
\end{proof}
\medskip

Let us say a few words about the proofs. The structure of Konno's proofs of Theorem~\ref{KTheorem} follows the ideas of Bramson et alii
(see \cite{BDS}), who studied the long range contact process (LRCP)
in a limiting r\'{e}gime, when the range $M$ of the contact process, goes to infinity. The
set up of Konno is very similar to that of Bramson et alii in \cite{BDS}, with the difference that  the stirring speed
 goes to infinity, and not the range. Note that in \cite{BDS}, the authors were able to prove an asymptotic result for LRCP which was later improved by Durrett and Perkins (see \cite{DP}) where a sharp asymptotics for the convergence of the critical value was obtained for dimensions $d\geq2$. To prove the sharp asymptotics,  Durrett and Perkins, in~\cite{DP}, had further rescaled space and time
and proved weak convergence of  the rescaled processes to super-Brownian motion with drift. This convergence almost immediately gives  the lower bound for the critical value for LRCP. As for the upper bound,  Durrett and Perkins bounded the LRCP from below by an oriented percolation process (for this they used again
   convergence to super-Brownian motion), and, by this, the upper bound for the critical value was derived.\newline

As for the proof of our main result---Theorem~\ref{C3}---it follows immediately that from~(\ref{konno1}) and  Lemma~\ref{thetaG}, that
it is sufficient to prove just the lower bound for the critical value. This makes the proofs far less complicated than those in~\cite{DP}. In fact,  we prove our result without proving a weak convergence result of rescaled processes  to super-Brownian motion, which was one of the main technical ingredients of the proofs in~\cite{DP}.

The rest of the paper is organized as follows. Formal definitions of the contact process and "speeded-up" contact process are given in Section~\ref{sec:2}.
Theorem~\ref{C3} is proved in Section~\ref{sec:3}.

\section{Formal definitions}
\label{sec:2}

Before we proceed to the proofs, let us give formal definitions in this section.

The contact process with rapid stirring takes place on the lattice $\Z^d$.
Fix parameter $\theta$ for this process.
The
state of the process at time $t$ is given by a function $\xi_t^N:\Z^d\rightarrow\{0, 1\}$, where the value of $\xi_t^N(x)$ is determined by the number of particles present at $x$
at time $t$. Assume that $ \xi_0^N=\delta_0$. Independently of each other:

\begin{enumerate}
  \item particles die at rate $1$ without producing offspring;
  \item particles split into two at rate
 $1+\theta/N$. If split occurs at $x\in \Z^d$, then one of
the particles replaces the parent, while the other is sent to a site $y$ chosen according to a uniform distribution on $\eth_d(x)$ (the nearest neighbouring sites of $x$). If a newborn particle lands on an occupied site, its birth is suppressed;
  \item for each $x,y\in \Z^d$, with $x-y\in \eth_d$, the values of $\xi^N$ at $x$ and $y$ are
 exchanged at rate $N$ (stirring).
\newline

\end{enumerate}
Just to clarify, when we say that events occur at a certain rate, we mean that times between events are independent exponential random variables with that rate. Let us also  make a comment about rule  3 above.
In terms of particles  dynamics, it means that whenever exchange between sites
$x$ and $y$ occurs, a particle at $x$ (if exists) jumps to $y$, and at the same time a particle at $y$ (if exists) jumps to $x$. If one follows the motion  of a typical particle, then, in the absence of branch events, it undergoes a symmetric random walk on $\Z^d$, with jumps at rate
 $2dN$.
\newline


For our proofs it will be convenient  to deal with
the speeded-up contact
process $\hat\xi^N_t:\Z \rightarrow \N\cup\{0\}$.  This process is defined as $\hat\xi^N_t=\xi^N_{Nt}, t\geq 0$. Clearly,
$ \hat \xi_0^N=\delta_0$. Obviously this process obeys the same rules as $\xi^N$, just all the events occur with rate multiplied by $N$. In particular, particles die and split (if possible) with rates $N$ and $N+\theta$ respectively;
stirring between any two neighbouring sites occurs with rate $N^2$.




\vspace{0.5cm}

\section{Proof of Theorem \ref{C3}}
\label{sec:3}

As we have mentioned already, (\ref{konno1}) and Lemma~\ref{thetaG} imply that Theorem \ref{C3} will follow from the lower bound for $\lambda_c(N)-1$ obtained in the next proposition.
\begin{Proposition}\label{T2-1} For $d\geq 3$  \[\liminf_{N\rightarrow\infty}\frac{\lambda_c(N)-1}{\vartheta/N}\geq
1.\]
\end{Proposition}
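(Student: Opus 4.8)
The plan is to show that if $\theta < \vartheta$ then the speeded-up process $\hat\xi^N_t$ dies out for all large $N$, which gives $\lambda_c(N)-1 = \theta/N$ cannot hold below $\vartheta/N$, i.e. $\liminf N(\lambda_c(N)-1)\geq\vartheta$. The natural strategy is a first-moment (expected-occupation) comparison: ignore the suppression of births onto occupied sites (this only decreases the true process, so it suffices to show the \emph{dominating} process with free branching dies out), obtaining a branching random walk with stirring. The one missing ingredient relative to a pure branching random walk is that branching events onto occupied sites are actually wasted; but the entire point of the argument is that, to leading order in $N$, the \emph{density} of the process is so low that two particles rarely collide, so the waste of births is a second-order correction that we can afford to throw away for the lower bound and must only control carefully enough that it does not overwhelm the $\theta/N$ drift. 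So the real object to analyze is: a branching random walk in which each particle performs a continuous-time symmetric random walk with jump rate $2dN^2$ (from stirring) plus the spreading from births, dies at rate $N$, branches at rate $N+\theta$, and where we subtract off a collision term.

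The key computation is the following. Let $u^N_t(x) = \ex{\hat\xi^N_t(x)}$. Writing the generator, $u^N$ solves a discrete reaction–diffusion equation of the form $\partial_t u^N = N^2 \Delta_d u^N + \theta\, u^N - (\text{birth correction onto occupied sites})$, where $\Delta_d$ is the nearest-neighbour Laplacian and the correction term involves $\ex{\hat\xi^N_t(x)\hat\xi^N_t(y)}$ for neighbours $x\sim y$. The total mass $m^N_t = \sum_x u^N_t(x)$ then satisfies $\tfrac{d}{dt} m^N_t = \theta\, m^N_t - \tfrac{1}{2d}(N+\theta)\sum_{x}\sum_{y\sim x}\ex{\hat\xi^N_t(x)\hat\xi^N_t(y)}$. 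To extract the constant $\vartheta$ one must show that the collision term is asymptotically $\bigl(4d^2\vartheta/N + o(1/N)\bigr)\, N \cdot m^N_t$-ish in the right averaged sense — more precisely, that over the relevant time horizon the expected number of ``self-intersections at neighbouring sites'' of the underlying branching structure, per unit mass, converges to $\sum_{n\geq 1}\pr{V_n\in\eth_d}$ times the appropriate combinatorial factor. This is exactly where $\vartheta = \tfrac{1}{4d^2}\sum_{n\geq1}\pr{V_n\in\eth_d}$ and its reformulation $2d\vartheta = G(0,0)-1$ (Lemma~\ref{thetaG}) enter: the Green's function counts the expected number of returns/near-returns of a single random walk, and the factor $N$ from the sped-up clock cancels the $1/N$ from the single-site density, leaving the finite constant $\vartheta$.

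Concretely I would proceed in these steps. \emph{Step 1:} Set up the dual/expectation equations and reduce to the dominating free-branching-with-correction process; bound the true survival probability in terms of whether $m^N_t$ (or a suitable supercritical functional) stays bounded. \emph{Step 2:} Estimate a two-particle quantity: track a tagged pair of particles (a particle and one of its descendants, or two particles present simultaneously) and compute the expected total time, suitably normalized, that they spend at neighbouring sites; show this converges, as $N\to\infty$, to $G(0,0)-1$ up to the explicit constants, using the local central limit theorem / Green's function asymptotics for the rescaled random walk and the fact that the branching mechanism contributes negligibly to close encounters on the relevant time scale. \emph{Step 3:} Plug the estimate into the mass equation, obtain an effective differential inequality $\tfrac{d}{dt}m^N_t \leq (\theta - \vartheta + \varepsilon_N)\,(\text{something})$ with $\varepsilon_N\to0$, and conclude that for $\theta<\vartheta$ the process is subcritical and dies out, hence $\lambda_c(N)-1\geq(\vartheta-o(1))/N$. \emph{Step 4:} Translate back from $\hat\xi^N$ to $\xi^N$ (a deterministic time change that does not affect survival) and take $\liminf$.

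The main obstacle is Step 2: controlling the collision/birth-suppression correction term sharply enough to get the precise constant $\vartheta$ rather than just an order-of-magnitude bound. One must show two things simultaneously — that collisions are rare enough that the first-moment method is not wasteful to leading order (so that dropping suppression for the \emph{lower} bound is legitimate), and yet that the leading collision contribution is captured \emph{exactly} by the Green's function constant, which requires uniform (in $N$) control of the pair-occupation measure near the diagonal, handling the initial transient before the random walk has ``mixed,'' and ruling out that branching creates anomalous clustering. This is where I expect most of the technical work to live; the rest of the argument is a fairly standard supercriticality/first-moment comparison once the constant is pinned down.
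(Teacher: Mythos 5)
Your high-level plan does match the paper's: write the mass ODE for $\hat m^N_t = \ex{|\hat\xi^N_t|}$ with a collision-correction term, show that the correction per unit mass converges to $\vartheta$ as $N\to\infty$, and close with Gr\"onwall. However, the opening of the proposal is logically backwards in a way worth flagging: you write that one should ``ignore the suppression of births onto occupied sites\dots so it suffices to show the dominating process with free branching dies out.'' The dominating free-branching process has birth rate $1+\theta/N>1=$ death rate for $\theta>0$, so it is supercritical and never dies out; the suppression is the sole extinction mechanism here, not a correction to be thrown away. Similarly, you say the collision term must be controlled so that it ``does not overwhelm the $\theta/N$ drift,'' which is again reversed --- for the lower bound on $\lambda_c$ you need exactly that the collision term \emph{does} overwhelm the drift, giving the negative effective coefficient $\theta-\vartheta$ in your Step 3. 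Since Step 3 does carry the correct sign, I read this as confused exposition rather than a wrong plan, but it hides the actual logic.

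The genuine gap is your Step 2, which you explicitly do not carry out (``this is where I expect most of the technical work to live'') and which is essentially the entire content of Proposition~\ref{T2-1} via Proposition~\ref{L111}. The needed estimate is a \emph{lower} bound on $\hat I^N_s$ of the form $\hat I^N_s\geq (2d\vartheta/N - o(1/N))\hat m^N_{s}$, and single-walker Green's-function asymptotics alone do not give it. The paper's argument has three ingredients your outline does not touch: (i) a temporal cutoff $\tau_N=\ln N/N^2$, so that $\hat I^N_s$ is bounded below by restricting to pairs whose most recent common ancestor split within $[s-\tau_N,s)$ --- this reduces the pair-occupation control to a two-sibling computation and is the key structural idea; (ii) the observation that the difference walk of two neighbours is \emph{not} a simple random walk (stirring moves neighbours jointly), requiring the occupation-time-matching coupling of Lemma~\ref{RWalks} between the true difference process $W^N$ and a genuine walk $V^N$ before the Green's-function identity of Lemma~\ref{thetaG} can be invoked; and (iii) the fact that for a non-initial particle $\alpha$, a newborn child can be killed at birth by an unrelated particle, so the sibling-collision indicator $Z_\alpha$ is stochastically \emph{smaller} than the initial-particle version $Z_1$ --- the wrong direction for a lower bound --- forcing the introduction and control of the correction term $\zeta_\alpha$, whose total contribution must be shown to be $O(N\tau_N)\to 0$. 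You flag ``ruling out anomalous clustering'' as an obstacle, which is indeed (iii), but propose no mechanism. Without (i)--(iii) the crucial inequality feeding Gr\"onwall is unproved, so the proposal is an outline of the paper's strategy rather than a proof.
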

In fact, Proposition~\ref{T2-1} follows easily from the following crucial result.
Recall that $\hat\xi^N_{t}=\xi^N_{Nt}, t\geq 0,$ was defined in Section~\ref{sec:2}.
\begin{Proposition}\label{L111} Fix an arbitrary $\theta<\vartheta$. Then there exist,  $N_\theta>0$ and $t_0=t_0(\theta)$, such that, for all $N>N_\theta$ and $t>t_0$,
\[\hat m_t^N \equiv  \e\left[\lb|\hat\xi_t^N\rb|\right]\leq e^{-\frac{1}{2}(\vartheta-\theta)t},\]

where $|\cdot|$ denote the total number of particles in the process.
\end{Proposition}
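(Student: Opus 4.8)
The plan is a first--moment computation, closed approximately through the pair--correlation function of $\hat\xi^N$. Set $\hat q_t^N=\e\bigl[\sum_{x}\sum_{y\in\eth_d(x)}\hat\xi_t^N(x)\hat\xi_t^N(y)\bigr]$, the expected number of ordered occupied neighbouring pairs. Applying the generator of $\hat\xi^N$ to $|\hat\xi_t^N|$, and using that a birth succeeds only onto an empty neighbour, gives the exact identity
\[
\frac{d}{dt}\,\hat m_t^N \;=\; \theta\,\hat m_t^N-\frac{N+\theta}{2d}\,\hat q_t^N .
\]
Thus it suffices to prove that, for all large $N$ and $t$, $\hat q_t^N \ge \frac{d(\theta+\vartheta)}{N+\theta}\,\hat m_t^N$: this forces $(\hat m_t^N)'\le-\tfrac12(\vartheta-\theta)\,\hat m_t^N$ eventually, and together with the crude a priori bound $\hat m_t^N\le e^{\theta t}$ (immediate from the identity above, which also says $(\hat m_t^N)'\le\theta\hat m_t^N$) one obtains the asserted estimate for $t>t_0$. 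There is room to spare here, since the true order of magnitude of $\hat q_t^N/\hat m_t^N$ will be $2d\vartheta/N$ and $2\vartheta>\theta+\vartheta$.

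For the lower bound on $\hat q_t^N$ I would use a Duhamel/renewal representation. An occupied neighbouring pair present at time $t$ is descended from some birth event; a birth at time $u$ lays down two ordered pairs at a displacement uniform on $\eth_d$, whereupon the two particles are killed at total rate $2N$ and their displacement runs a ``pair difference walk'' $D^N$ — a symmetric random walk with jump rate of order $N^2$ whose only departure from an ordinary simple random walk is at $\eth_d$, where the exclusion rule deletes the transitions that would push the displacement onto $0$ (the generator of $D^N$ is read off from the evolution equation for $\hat\rho_t^N(v)=\e[\sum_x\hat\xi_t^N(x)\hat\xi_t^N(x+v)]$, which is an ordinary lattice Laplacian for $\|v\|_1\ge2$ and is modified only on $\eth_d$). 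Discarding the nonnegative contribution of pairs of unrelated particles,
\[
\hat q_t^N \;\ge\; 2(N+\theta)\int_0^t\Bigl(\hat m_u^N-\tfrac{1}{2d}\hat q_u^N\Bigr)\,K_N(t-u)\,du \;-\; \mathrm{Err}^N_t ,
\qquad K_N(s):=e^{-2Ns}\,\p\bigl(D^N_s\in\eth_d\bigr),
\]
where $\mathrm{Err}^N_t$ collects triple--contact terms and terms coming from stirring interactions between a pair and a third particle.

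Everything then hinges on $I_N:=\int_0^\infty K_N(s)\,ds$. Rescaling time by $N^2$ turns $D^N$ into an $O(1)$--rate walk $\widetilde D$ and $e^{-2Ns}$ into $e^{-2\tau/N}\to1$, so, using transience of $\widetilde D$ in $d\ge3$ to guarantee $\int_0^\infty\p(\widetilde D_\tau\in\eth_d)\,d\tau<\infty$ and then dominated convergence, $N^2I_N\to\e[\text{total time }\widetilde D\text{ spends in }\eth_d\mid\widetilde D_0=e_1]$; a short Green's--function computation in the spirit of Lemma~\ref{thetaG} evaluates the limit as $d\vartheta=\tfrac12(G(0,0)-1)$, and a similar estimate gives $\int_0^\infty sK_N(s)\,ds=o(I_N)$, whence $\int_0^\infty e^{-\lambda s}K_N(s)\,ds=(1+o(1))I_N$ uniformly for $\lambda$ in compact sets. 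To turn the renewal inequality into decay of $\hat m_t^N$ without struggling with slow variation I would pass to Laplace transforms: with $\hat M(\lambda)=\int_0^\infty e^{-\lambda t}\hat m_t^N\,dt$ (and likewise $\hat Q$, $\widehat{\mathrm{Err}}$), the renewal bound gives $\hat Q(\lambda)\ge 2(N+\theta)(1+o(1))I_N\,\hat M(\lambda)-\widehat{\mathrm{Err}}(\lambda)$ — the $\hat q$--term on the right is absorbed because $(N+\theta)I_N\to0$ — and feeding this into the Laplace transform of the first--moment identity yields
\[
\Bigl(\lambda-\theta+\tfrac{(N+\theta)^2I_N}{d}\,(1+o(1))\Bigr)\hat M(\lambda)\;\le\;\hat m_0^N+o\bigl(\hat M(\lambda)\bigr)=1+o\bigl(\hat M(\lambda)\bigr).
\]
Since $(N+\theta)^2I_N/d\to\vartheta$, for $\theta<\vartheta$ the coefficient is bounded below by a positive constant for every $\lambda>-(\vartheta-\theta)+\varepsilon$ once $N$ is large, so $\hat M$ is finite there; a routine argument using $(\hat m_t^N)'\le\theta\hat m_t^N$ then upgrades $\hat M\bigl(-(\vartheta-\theta-\varepsilon)\bigr)<\infty$ to $\hat m_t^N\le C_\varepsilon e^{-(\vartheta-\theta-\varepsilon)t}$, which lies below $e^{-\frac12(\vartheta-\theta)t}$ for $t>t_0(\theta)$.

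The real work, and the main obstacle, is to control $\mathrm{Err}^N_t$ and all the $o(1)$'s uniformly in $t$. Each error term has the form (rate of order $N^2$) $\times$ (correlation of three or more sites within bounded distance), and it is negligible against the pair correlations precisely because rapid stirring keeps the configuration dilute. I would make this quantitative by a bootstrap: dominating $\hat\xi^N$ from above by the suppression--free branching random walk and using heat--kernel bounds, one shows that after a burn--in time $t_0$ the occupied set is spread over a region of volume of order $N^{d}$, so that all local densities, and hence $\mathrm{Err}^N_t/\hat q_t^N$, are $o(1)$ uniformly for $t>t_0$. This comparison does, in a much lighter form, the job that the convergence to super--Brownian motion does in \cite{DP}.
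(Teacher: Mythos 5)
Your starting point is the same as the paper's: the first--moment identity
\[
\hat m_t^N=1+\theta\!\int_0^t\hat m_s^N\,ds-\frac{1}{2d}\!\int_0^t N\hat I_s^N\,ds
\]
(your $\hat q$ is the paper's $\hat I$), the observation that only a lower bound on the pair--correlation term is needed, and the identification of $\vartheta$ through the ``pair difference walk'': your $D^N$ is precisely the paper's Markov chain $W^N$, and your claim $N^2 I_N\to d\vartheta$ is essentially Lemmas~\ref{RWalks} and~\ref{L661}. Where the two arguments diverge is in how the error from collision--suppressed births is controlled, and there your proposal has a genuine gap.

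The paper never writes a full Duhamel series and never touches triple correlations. It introduces the short--memory cutoff $\tau_N=\ln N/N^2$ and lower--bounds $\hat I_s^N$ only over pairs whose most recent common ancestor split during $[s-\tau_N,s)$. The ``lost'' contribution (children killed at birth by a collision with a third particle) is the term $\zeta_\alpha$, and the crucial point is the chain (\ref{Xi1})--(\ref{Xi2}): $\Xi_s^N$ is bounded by $N^2\int_{s-\tau_N}^s\hat I_r^N\,dr$, and $\int_0^t\hat I_r^N\,dr$ is in turn bounded \emph{using the same first--moment identity} by $CN^{-1}(1+\int_0^t\hat m_s^N\,ds)$. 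The error is therefore controlled by the very quantity being estimated, up to the small prefactor $N\tau_N=\ln N/N\to0$, and the whole argument closes at the pair--correlation level via Gr\"onwall, with no uniform density estimate at all.

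Your proposal instead postpones all the difficulty to $\mathrm{Err}^N_t$ and the uniformity of the $o(1)$'s, and the bootstrap you sketch to handle them does not work as stated. You propose to dominate $\hat\xi^N$ by the suppression--free branching random walk and use heat--kernel bounds to conclude that ``after a burn--in time $t_0$ the occupied set is spread over a region of volume of order $N^d$, so local densities are $o(1)$ uniformly for $t>t_0$.'' But in the speeded--up time scale, the BRW has $\sim e^{\theta t}$ particles spread over volume $\sim(N\sqrt t)^d$, so the upper bound on local density is $\sim e^{\theta t}/(N\sqrt t)^d$, which for any fixed $N$ blows up as $t\to\infty$; the claim is false unless one already knows $\hat m_t^N$ stays bounded, i.e.\ the conclusion of the proposition. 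This is exactly the circularity that the $\tau_N$--cutoff device is designed to break, and it is also the step that in \cite{DP} (for the long--range case) requires the full weight of the super--Brownian--motion machinery --- machinery the present paper explicitly avoids. Unless you replace the bootstrap by something that, like (\ref{Xi1})--(\ref{Xi2}), re--expresses the error in terms of $\hat m$ or $\hat I$ themselves rather than an a priori density bound, the argument does not close.
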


\begin{proof} {\textbf{of Proposition \ref{T2-1}}}\newline
Fix $\theta<\vartheta$, and choose
$N_\theta$ as in Proposition \ref{L111}. Then,
by this proposition, and the fact that the number of particles is an integer, we have that

\bea{asd2}\nn\pr{\lb|\hat\xi_t^N\rb|=0}&\geq&1-\hat m_t^N\\\nn&\geq&1-e^{-\frac{1}{2}(\vartheta-\theta)t}\\\nn&\rightarrow&1,\eea
as $t\rightarrow\infty$, for all $N\geq N_\theta$.\newline

From this, it follows immediately that for $N>N_{\theta}$,  $\{\hat\xi_t^N\}_{t\geq 0}$ dies out  in finite time, with probability one. The same happens with
$\lb\{\xi_t^N\rb\}_{t\geq 0}$ with probability one.\newline

Thus we have shown that for any $\theta<\vartheta$, there exists an
$N_\theta$ such that for every $N>N_\theta$,
\[\pr{\xi_t^N\not=\emptyset\mbox{ for all }t>0\given\xi_0^N=\delta_0}=0.\]

Therefore,
\[\inf\{\theta:\pr{\xi_t^N\not=\emptyset\mbox{ for all }t>0\given\xi_0^N=\delta_0}>0\}\geq\vartheta,\]

and, by definition of $\lambda_c(N)$, the proof of Proposition \ref{T2-1} is finished.\qed
\end{proof}
\vspace{0.5cm}
\begin{Remark}
By Lemma \ref{thetaG} and (\ref{konno1}), we have proven Theorem \ref{C3} modulo Proposition \ref{L111}.
\end{Remark}

The rest of this section is devoted to the proof of Proposition~\ref{L111}.
Before we proceed to its proof, let us first define some additional notation.
\begin{itemize}
\item Denote particles by Greek letters $\alpha, \beta, \gamma$ with the convention that $\alpha_0$ is the ancestor of $\alpha$ in generation 0. We use the branching process representing, so $\beta=(\beta_0,0,0,1,1,...)$ means that $\beta$ descends from $\beta_0$ through $(\beta_0, 0)$ then $(\beta_0,0,0)$ etc. For the rest of the paper, as we start from the single particle, we set $\beta_0=1$.
\item Consider that particles change names every time an event they branch (die or split).
\item Let $\alpha\wedge\beta$ denote the most recent common ancestor of $\alpha$ and $\beta$.
\item In the
 ``speeded-up''  process $\{\hat\xi_t^N\}_{t\geq 0}$, let $T_\alpha$ denote that time at which $\alpha$ branches (dies or splits) and let $B^\alpha_t$ be the location of $\alpha$ lineage at $t$ with the convention that $B^\alpha_t=\Delta$ if the particle is not alive at $t$.
\end{itemize}

Let
\bea{63} \tau_N=\frac{\ln N}{N^2}>0. \eea

The idea behind the proof is that in order to bound from above the total mass of the process, we can ignore collisions between distant relatives. To this end, we define a sequence of times $\tau_N$ such that collisions between relatives farther related than $\tau_N$, in the process $\hat \xi_t^N$, can be ignored. Roughly, two particles starting from the same position, need just above $N^{-2}$ units of time to "get lost", so, after that time they never meet again with very high probability. That is why we choose  $\tau_N$ as in $(\ref{63})$ --- $\tau_N$ is just a little bit larger than $N^{-2}$. A similar idea  is used in a number of papers (see e.g. \cite{DP} for the long range contact process).\newline

Let \[Z_1(t)=1(T_1\in[t,t+\tau_N), B^\beta_{t+\tau_N}-B^\gamma_{t+\tau_N}\in \eth_d),\] where $\beta=(1,0)$ and $\gamma=(1,1)$ are the children of $1\equiv \beta_0=\gamma_0$ -- the first particle. $Z_1(t)$ is the indicator of the event that the lineage of $1$ has exactly one splitting event in $[t, t+\tau_N)$, no deaths and its two offsprings ($\beta$ and $\gamma$) are alive and neighbours at time $t+\tau_N$. Note that the condition $B^\beta_{t+\tau_N}-B^\gamma_{t+\tau_N}\in \eth_d$ implies that there no deaths, as both particles must not be in $\Delta$ at time $t$. The same condition also implies that there were no more splits in $1$-th line,
since $\beta$ and $\gamma$ are the children of the particle $1$. Set $Z_1\equiv Z_1(0)$.\newline

Before we proceed to the actual proof of Proposition \ref{L111} we will need an  auxiliary result.
Let $\{V_t^N\}_{t\geq 0}$ be a continuous time, symmetric random walk on $\Z^d$ jumping with rate
$4dN^2$ and starting at the origin. Let $\{W_t^N\}_{t\geq 0}$ be a continuous time Markov chain taking
values in $\Z^d$, starting from the origin, and evolving as follows. If $W_t^N=x\in\eth_d$ then,
with rate $(4d-1)N^2$, $W^N_t$ makes a jump to $y\in\Z^d$, whereas with probability $\frac{4d-2}{4d-1}$ $y$
is chosen uniformly from
$\eth_d(x)\setminus\{\mathbf{0}\}$, and with probability $\frac{1}{4d-1}$, $y=-x$. If $W_t^N=x\not\in\eth_d$, then, with rate $4dN^2$, $W^N$ makes a jump to $y$ uniformly distributed in $\eth_d(x)$.\newline

Note that $W_t^N$ describes the behaviour of the difference in locations of two typical particles
in the process $\hat\xi^N$ in the absence of branching events. Such particles  move around independently like
symmetric random walks, with jumps rates $2dN^2$,   until they become neighbours. While they are neighbours, their behaviour is dictated by the stirring rules.

\begin{Lemma}\label{RWalks}
\[\ex{\int_0^t 1(V_s^N\in\eth_d)ds}=\ex{\int_0^t1(W_s^N\in\eth_d)ds}.\]
\end{Lemma}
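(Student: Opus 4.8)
For $X\in\{V^N,W^N\}$ write $\sigma^X=\int_0^\infty 1(X_s\in\eth_d)\,ds$. Since $d\ge 3$, once either walk reaches $O:=\Z^d\setminus(\{0\}\cup\eth_d)$ it moves as a transient simple random walk, hence visits the finite set $\eth_d$ only finitely often, so $\sigma^X<\infty$ a.s.\ and $\e[\sigma^X]<\infty$. The plan is to prove the equality of the total expected occupation times $\e_0[\sigma^{V^N}]=\e_0[\sigma^{W^N}]$ (the $t\to\infty$ form of the displayed identity, which is the form used in the sequel), by writing a one-step renewal equation for each walk at its first entrance into $O$ and observing, via the coordinate symmetry of $\Z^d$, that the two equations have the same coefficients. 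Two facts make this go through: (a) the time spent in $\eth_d$ before first reaching $O$ is $\mathrm{Exp}\big((4d-2)N^2\big)$ for \emph{both} walks; (b) after that excursion the probability of ever returning to $\eth_d$ is the same number for both.

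Two preliminary remarks. \emph{(i)} Both walks, together with $\eth_d$, the involution $x\mapsto-x$ and ``uniform neighbour'', are invariant under the group $\G$ of signed coordinate permutations, which fixes $0$ and acts transitively on $\eth_d$; hence $a^X:=\e_x[\sigma^X]$ does not depend on $x\in\eth_d$, and $\e_0[\sigma^X]=a^X$ (from $0$ both walks spend an exponential time there, contributing $0$ to $\sigma^X$, then jump to a uniform point of $\eth_d$). \emph{(ii)} From any $z\in O$ a jump to $0$ is impossible (all neighbours of $0$ lie in $\eth_d$), and on $O$ both walks coincide with the free rate-$4dN^2$ simple random walk; so, with $\phi(z)=\p_z(\text{the free walk ever hits }\eth_d)$, the strong Markov property and \emph{(i)} give $\e_z[\sigma^X]=a^X\phi(z)$ for $z\in O$; note $\phi<1$ at points at distance $2$ from $0$, by transience.

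Fix $x\in\eth_d$, start the walk there, and let $\tau$ be the first time it enters $O$. Splitting $\sigma^X$ at $\tau$ and using \emph{(ii)} at $X_\tau$,
\[
a^X=\e_x\!\left[\int_0^\tau 1(X_s\in\eth_d)\,ds\right]+a^X\,\e_x[\phi(X_\tau)]=:p^X+a^X\,\overline\phi^{\,X}.
\]
In either chain the \emph{only} transitions that can carry the walk from $\eth_d$ into $O$ are the jumps from an $\eth_d$-site $y$ to a point of $\eth_d(y)\setminus\{0\}$, and their total rate is $(4d-2)N^2$ whenever the walk sits in $\eth_d$; since such a jump is exactly what ends $[0,\tau)$ and is governed by a Poisson clock of intensity $(4d-2)N^2\,1(X_s\in\eth_d)\,ds$, the elapsed $\eth_d$-time at $\tau$, namely $\int_0^\tau 1(X_s\in\eth_d)\,ds$, is $\mathrm{Exp}\big((4d-2)N^2\big)$. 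Thus $p^{V}=p^{W}=\tfrac1{(4d-2)N^2}$. Next, conditionally on the last $\eth_d$-site $y$ visited before $\tau$ the point $X_\tau$ is uniform on $\eth_d(y)\setminus\{0\}$ with $y\in\eth_d$ (for $W^N$, $y\in\{x,-x\}$; for $V^N$, $y$ is $x$ or a uniform point of $\eth_d$), and by $\G$-symmetry the number $\e[\phi(U)]$ with $U$ uniform on $\eth_d(y)\setminus\{0\}$ is the same for every $y\in\eth_d$; call it $\overline\phi$, and note $\overline\phi<1$. Hence $\overline\phi^{\,V}=\overline\phi^{\,W}=\overline\phi$, so $a^{V}=p^{V}/(1-\overline\phi)=p^{W}/(1-\overline\phi)=a^{W}$, which is the assertion.

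It is worth identifying the common value: by Tonelli and the fact that a rate-$r$ continuous-time walk has Green's function $r^{-1}G$,
\[
\e_0[\sigma^{V^N}]=\sum_{y\in\eth_d}\int_0^\infty\p_0(V_s^N=y)\,ds=\frac1{4dN^2}\sum_{y\in\eth_d}G(0,y)=\frac{G(0,0)-1}{2N^2},
\]
using $G(0,0)=1+\tfrac1{2d}\sum_{y\in\eth_d}G(y,0)$; by Lemma~\ref{thetaG} this equals $d\vartheta/N^2$. The one genuinely delicate step is (b): one must check that although $V^N$ and $W^N$ behave differently inside $\eth_d$ --- $V^N$ may slip to $0$ and re-randomise over $\eth_d$, whereas $W^N$ can only jump to the antipode --- each of them leaves $\eth_d$ into $\bigcup_{y\in\eth_d}(\eth_d(y)\setminus\{0\})$ with a law whose average against the $\G$-invariant function $\phi$ is one and the same constant. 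The remaining ingredients (finiteness via transience, the exponential law of the $\eth_d$-clock at $\tau$, the strong Markov applications) are routine for $d\ge3$.
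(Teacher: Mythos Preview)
Your argument is correct and is in fact tighter than the paper's on the decisive point. Both you and the paper isolate the fact that the $\eth_d$--occupation time accumulated during one visit to $\eth_d\cup\{0\}$ is $\mathrm{Exp}\big((4d-2)N^2\big)$ for each chain. The paper then asserts that one can \emph{couple} $V^N$ and $W^N$ so that they agree at every exit from $\eth_d\cup\{0\}$; but the exit laws (in position and in real time) are not the same for the two chains --- $V^N$ may slip through $0$ and re-randomise over all of $\eth_d$ before exiting, whereas $W^N$ can only bounce between $x$ and $-x$ --- so that coupling is not literally available. Your renewal equation $a^X=p^X+a^X\overline\phi^{\,X}$, together with the observation that $\overline\phi^{\,X}$ depends on the exit law only through the average of the $\G$--invariant function $\phi$ over $\eth_d(y)\setminus\{0\}$ (which is the same for every $y\in\eth_d$), is precisely the missing ingredient that closes this gap for the total occupation time.

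One caveat. The displayed finite-$t$ identity is actually false: a generator computation gives $\p(V^N_s\in\eth_d)=4dN^2s-16d^2(N^2s)^2+O(s^3)$ but $\p(W^N_s\in\eth_d)=4dN^2s-(16d^2-4d)(N^2s)^2+O(s^3)$, so the two integrals differ at order $t^3$. You are therefore right to prove only the $t\to\infty$ version. However, the paper invokes the lemma at $t=\tau_N$ in passing from (\ref{83}) to (\ref{84}), not at $t=\infty$; your identity still suffices there, since after the change of variable the integration range becomes $[0,4dN^2\tau_N]=[0,4d\ln N]$ and both occupation-time tails $\int_{4d\ln N}^\infty$ vanish as $N\to\infty$ by transience, giving the same limit in Lemma~\ref{L661}. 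You should state this bridge explicitly rather than asserting that the $t\to\infty$ form ``is the form used in the sequel.''
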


\begin{proof}
Given $V_t^N\in\eth_d$, define $T_v$ to be the time $V^N$ spends in $\eth_d$ before leaving the set $\eth_d\cup \{\bf{0}\}$. Then clearly,
\[T_v=\sum_{i=1}^R\epsilon_i,\]

where $\epsilon_i$s are independent random variables distributed according to exponential distribution with rate $4dN^2$ and $R$ is independent of them and is geometric with parameter  $\frac{2d-1}{2d}$. Clearly $T_v$ is exponentially distributed with $\ex{T_v}=(2(2d-1)N^2)^{-1}$.\newline

Similarly, at every visit to $\eth_d$, $W_t^N$ spends in $\eth_d$ a time $T_w=\sum_{i=1}^{R'}\epsilon'_i$, where $R'$ is geometric with parameter $\frac{4d-2}{4d-1}$ and $\epsilon_i'$ are independent exponentially distributed random variables with rate $(4d-1)N^2$ and independent of $R'$. Thus, $T_w$ is exponentially distributed with the mean
\[\lb((4d-1)N^2\frac{4d-2}{4d-1}\rb)^{-1}=(2(2d-1)N^2)^{-1}.\]

Thus we may couple the processes together by setting them to be equal, every time they exit $\eth_d\cup\{\bf{0}\}$, and as this does not change time they spend in $\eth_d$ the result follows.\qed
\end{proof}

\begin{Lemma}\label{L661}
\[
\lim_{N\rightarrow\infty}N\ex{Z_1}=d\vartheta.\]
\end{Lemma}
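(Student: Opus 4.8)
The plan is to compute $\e[Z_1]$ directly by conditioning on the relevant branching structure, and then identify the limit of $N\e[Z_1]$ with $d\vartheta$ via Lemma~\ref{RWalks} and the definition \eqref{vartheta} of $\vartheta$. Recall $Z_1 = 1(T_1\in[0,\tau_N),\ B^\beta_{\tau_N}-B^\gamma_{\tau_N}\in\eth_d)$. First I would decompose according to the first event in the lineage of particle $1$ (the ancestor): with rate $N$ it dies, with rate $N+\theta$ it splits. The event $\{B^\beta_{\tau_N}-B^\gamma_{\tau_N}\in\eth_d\}$ already forces that the \emph{only} event in the $1$-lineage on $[0,\tau_N)$ is a single split (a death would put one child at $\Delta$, and a second split would rename one of $\beta,\gamma$), so
\[
\e[Z_1]=\int_0^{\tau_N}(N+\theta)e^{-(2N+\theta)s}\,\pr{B^\beta_{\tau_N}-B^\gamma_{\tau_N}\in\eth_d \mid \text{split at time }s}\,ds.
\]
Here the factor $(N+\theta)e^{-(2N+\theta)s}$ is the density for ``first event is a split, and it happens at time $s$'' (total rate $2N+\theta$ of events, split chosen at rate $N+\theta$), and implicitly I must also multiply by the probability that neither $\beta$ nor $\gamma$ branches on $(s,\tau_N)$; since that window has length at most $\tau_N\to 0$ and the branching rates are $O(N)$, this probability is $1-O(N\tau_N)=1-O(\ln N/N)\to 1$ after we multiply by $N$ — but I would keep track of it carefully since it contributes at the same order before the limit.

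Next, conditioned on a split at time $s$, the two children $\beta,\gamma$ start at neighbouring sites (distance $1$ in $\ell_1$) and, \emph{conditioned on no further branching}, their difference process evolves exactly as the Markov chain $W^N$ of Lemma~\ref{RWalks} run for time $\tau_N-s$, started from a point of $\eth_d$. So the conditional probability above is $\pr{W^N_{\tau_N-s}\in\eth_d \mid W^N_0\in\eth_d}$ up to the no-branching correction. Substituting and using that the branching corrections are uniformly $1-o(1/N)$ on the relevant range, I get
\[
N\e[Z_1] = N(1+o(1))\int_0^{\tau_N}(N+\theta)e^{-(2N+\theta)s}\,\pr{W^N_{\tau_N - s}\in\eth_d}\,ds + o(1),
\]
where the expectation is under $W^N_0 = $ a fixed neighbour of the origin. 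Changing variables $u=\tau_N-s$ and recognizing that $(N+\theta)e^{-(2N+\theta)s}\,ds$ concentrates near $s=0$ (mass escaping to $s\le \tau_N$ costs $e^{-(2N+\theta)\tau_N}=N^{-2-o(1)}\to 0$), the integral is asymptotically $\tfrac12\,\pr{W^N_{\tau_N}\in\eth_d}$-type — but I actually need the \emph{time integral}, not the value at a point, which is where Lemma~\ref{RWalks} enters. I would instead organize the computation so that $N\e[Z_1]$ becomes (asymptotically) $N\cdot(N+\theta)\cdot\e\!\left[\int_0^{\tau_N}1(W^N_u\in\eth_d)\,du\right]$-like after exchanging the order of integration: writing $\pr{W^N_{\tau_N-s}\in\eth_d}$ and integrating the exponential density against it over $s$, then Fubini, produces $\int_0^{\tau_N}(\text{something}\le 1)\cdot 1(W^N_u\in\eth_d)$. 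The cleanest route: bound $(N+\theta)e^{-(2N+\theta)s}\le N+\theta$ and $\ge (N+\theta)e^{-(2N+\theta)\tau_N}$ on $[0,\tau_N]$, so that
\[
N\e[Z_1] \sim N(N+\theta)\,\e\!\left[\int_0^{\tau_N}1(W^N_u\in\eth_d)\,du\right] \big/ (\text{correction }\to 1),
\]
and then invoke Lemma~\ref{RWalks} to replace $W^N$ by the simpler walk $V^N$ (jump rate $4dN^2$, started at the origin — note the shift: $W^N$ started at a neighbour of $0$ has the same $\eth_d$-occupation law as $V^N$ started at $0$, by translation, since $V^N$ started at $0$ is at a neighbour of the origin... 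I must check this identification matches the statement of Lemma~\ref{RWalks} exactly, which compares $V^N$ from $0$ with $W^N$ from $0$; the ``started at a neighbour'' issue is handled because the split sends $\gamma$ to a uniform neighbour of $\beta$, so the difference is a uniform point of $\eth_d$, and by symmetry this is the same as $V^N$ having just taken its first jump from $0$).

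Finally I would evaluate $\e\!\left[\int_0^{\tau_N}1(V^N_u\in\eth_d)\,du\right]$ as $N\to\infty$. Since $V^N_u = V^{(1)}_{4dN^2 u}$ where $V^{(1)}$ is the discrete-time symmetric walk $\{V_n\}$ of \eqref{vartheta} evaluated at a rate-$1$ Poisson clock, we have $\e\!\left[\int_0^{\tau_N}1(V^N_u\in\eth_d)\,du\right] = \tfrac{1}{4dN^2}\,\e\!\left[\int_0^{4dN^2\tau_N}1(\tilde V_r\in\eth_d)\,dr\right]$ where $\tilde V$ is the continuous-time rate-$1$ walk; and $4dN^2\tau_N = 4d\ln N\to\infty$, while $\int_0^\infty 1(\tilde V_r\in\eth_d)\,dr$ has expectation $\sum_{n\ge 1}\pr{V_n\in\eth_d}$ (each visit to $\eth_d$ contributes an Exp$(1)$ holding time of mean $1$, and the expected number of visits to a point $y\in\eth_d$ is $G(0,y)=G(0,0)-$ returns... more precisely $\e\int_0^\infty 1(\tilde V_r\in\eth_d)dr = \sum_{y\in\eth_d}\sum_{n\ge 0}\pr{V_n=y} = \sum_{n\ge 1}\pr{V_n\in\eth_d}$ using $V_0=0\notin\eth_d$). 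By transience ($d\ge 3$) this sum is finite, so the truncation at $4d\ln N$ costs $o(1)$. Hence $N^2\,\e\!\left[\int_0^{\tau_N}1(V^N_u\in\eth_d)\,du\right]\to \tfrac{1}{4d}\sum_{n\ge 1}\pr{V_n\in\eth_d} = d\vartheta$ by \eqref{vartheta}, and combining with $N\e[Z_1]\sim N(N+\theta)\cdot(\text{that})\cdot N^{-2}\cdot\frac{N}{N}$... I collect powers: $N\e[Z_1]\sim (N+\theta)\cdot N\cdot \e[\int_0^{\tau_N}1(V^N\in\eth_d)] \sim N^2 \cdot \e[\int_0^{\tau_N}\cdots] \to d\vartheta$, as claimed.

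The main obstacle is bookkeeping the lower-order corrections: the ``no further branching of $\beta$ or $\gamma$ on $(s,\tau_N)$'' factor and the replacement of the exponential density $(N+\theta)e^{-(2N+\theta)s}$ by a constant both introduce errors of relative size $O(N\tau_N) = O(\ln N/N)$, which vanish only \emph{after} multiplying by $N$ — so one must verify they are genuinely $o(1)$ and not $O(1)$ contributions. A secondary subtlety is the precise identification of the difference-of-two-children process with the chain $W^N$ of Lemma~\ref{RWalks}, including the correct initial distribution (uniform on $\eth_d$) and matching it to $V^N$ started at the origin; this is a translation-invariance plus symmetry argument but needs to be stated cleanly.
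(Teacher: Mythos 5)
Your proposal is correct and matches the paper's proof in all essentials: condition on the time of the single split in $[0,\tau_N)$, identify the displacement of the two children with the chain $W^N$, pass to the symmetric walk $V^N$ via Lemma~\ref{RWalks}, and evaluate the occupation-time integral through the rate-one Poisson clock to obtain $\tfrac{1}{4d}\sum_{n\ge 1}\pr{V_n\in\eth_d}=d\vartheta$. The paper organizes the bookkeeping slightly differently --- it factors $\ex{Z_1}=\pr{F_1}\cdot\pr{B^\beta_{\tau_N}-B^\gamma_{\tau_N}\in\eth_d\mid F_1}$ with $\pr{F_1}$ computed in closed form rather than integrating the split-time density directly --- and it disposes of your ``starting-point'' worry exactly as you indicate, by writing the displacement as $W+V^N_s$ with $W$ uniform on $\eth_d$ (translation being exact for the homogeneous walk $V^N$).
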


\begin{proof}
First we calculate the probability of $F_1$, the event that there is exactly one birth in 1's lineage in $\tau_N$ units of time and no deaths on any of the branches. As the births in $\hat\xi^N$ process occur according to a Poisson process with rate $N+\theta$,  we have that

\bea{F1}\pr{F_1}&=&(N+\theta)\tau_Ne^{-(N+\theta)\tau_N}e^{-N\tau_N}\cdot\frac{1}{\tau_N}\int_0^{\tau_N}e^{-(2N+\theta)s}ds\\
\nn&=&\frac{N+\theta}{2N+\theta}e^{-(2N+\theta)\tau_N}\lb( 1-e^{-(2N+\theta)\tau_N} \rb).\eea

\[\ex{Z_1}=\pr{B^\beta_{\tau_N}-B^\gamma_{\tau_N}\in\eth_d\given F_1}\pr{F_1},\]

where $\beta$ and $\gamma$ are the offspring of $1$ alive at $\tau_N$.\newline

\bea{83}\hspace{-1cm}\pr{B^\beta_{\tau_N}-B^\gamma_{\tau_N}\in\eth_d \given
F_1}&=&\frac{1}{\tau_N}\int_0^{\tau_N}
\pr{W+W^N_{\tau_N-t}\in\eth_d}dt,\eea where $W$ is uniform on
$\eth_d$ and is the difference of positions of the two children of
$1$, right after the split; $\{W^N_t\}_{t\geq 0}$ is a continuous time Markov process defined in Lemma \ref{RWalks} independent of $W$.\newline

Change the variable in the integral, use Lemma \ref{RWalks} to get that
(\ref{83}) is equal to

\bea{84}&&\hspace{-1cm}\frac{1}{\tau_N}\int_0^{\tau_N}
\pr{W+V^N_s\in\eth_d}ds\\\nn &=&\frac{1}{\tau_N}\int_0^{\tau_N}
\sum_{n=0}^\infty\pr{W+V_n\in\eth_d}\frac{e^{-4dN^2s}(4dN^2s)^n}{n!}ds, \eea

where $V_n$ is a simple symmetric random walk on $\Z^d$ independent of $W$.\newline

Now let $\{\pi(u)\}_{u\geq 0}$ be a Poisson process with rate $1$ defined on the same probability space and independent of $W$ and $\{V_n\}_{n\geq 0}$. Define \[ h(u)\equiv\pr{W+V_{\pi(u)}\in\eth_d}.\]

Then, by independence of $\{\pi(u)\}_{u\geq 0}$ and $\{W+V_n\}_{n\geq 0}\,$ (\ref{84}) can be written as

\be{85} \frac{1}{\tau_N}\int_0^{\tau_N} h(4d N^2s) ds.
\ee

So, from (\ref{F1}), (\ref{85}), and (\ref{63}) we have that
\bea{86} \hspace{-2cm}\ex{Z_1}&=&\frac{N+\theta}{2N+\theta}e^{-2(N+\theta)\tau_N}\lb(1-e^{-(2N+\theta)\tau_N}\rb)\frac{1}{\tau_N}\int_0^{\tau_N}h(4d
N^2s)ds\\\nn&=&\frac{1}{4d
N^2\tau_N}\frac{N+\theta}{2N+\theta} e^{-(2N+\theta)\tau_N}\lb(1-e^{-(2N+\theta)\tau_N}\rb)\int_0^{4d
N^2\tau_N}h(r)dr,\eea where the
last equality follows from changing the variable inside the integral.\newline

Now let $N\rightarrow\infty$, use the Taylor expansion for the
exponential and the monotone convergence theorem to get that

\[ \lim_{N\rightarrow\infty}N\ex{Z_1}=\frac{1}{4d}\int_0^\infty\pr{W+V_{\pi(s)}\in\eth_d}ds. \]

The times between jumps of $V_{\pi(s)}$ are exponential with mean $1$.
Therefore \be{88}
\lim_{N\rightarrow\infty}N\ex{Z_1}=\frac{1}{4d}\sum_{n=1}^\infty\pr{V_n\in\eth_d}.\ee
This finishes the proof of this lemma. \qed
\end{proof}
\vspace{0.5cm}

\begin{proof}{\textbf{ of Proposition \ref{L111}}}\newline
Set $m_t^N=\ex{|\xi_t^N|}$. From (1.5) of \cite{Kon}, we have
\bea{asd3}\nn  m_{Nt}^N&=&1+\int_0^{Nt}\frac{\theta}{N} m_s^Nds-\frac{1}{2d}\int_0^{Nt}  I_s^Nds\\\nn&=&1+\int_0^t\theta  m_{Ns}^Nds-\frac{1}{2d}\int_0^t N  I_{Ns}^Nds,\eea

where $I_s^N$ is twice the expected number of pairs of neighbours in $\xi_{s}^N$  at time~$s$.
Let $\A(s)$ be the set of particles which are alive at $s$ in $\hat\xi_s^N$, and
 \[\hat I_s^N=\e\left[\sum_{\alpha,\beta\in \A(s)}1(B^\alpha_s-B^\beta_s\in \eth_d)\right]\] be twice the expected number of pairs of neighbours in $\hat \xi_t^N$ at time $s$.\newline

Thus, with $\hat m_t^N=m_{Nt}^N=\ex{|\xi_{Nt}^N|}$, we have
\be{Kon15}\hat m_t^N=1+\int_0^t\theta \hat m_s^Nds-\frac{1}{2d}\int_0^tN\hat I_s^Nds.\ee

Clearly, \[\hat m_t^N\leq 1+\theta\int_0^t \hat m_s^Nds,\spa \forall t\geq 0.\]

Therefore, as $1$ is a non-decreasing function, Gr\"{o}nwall's lemma gives that

\be{Gron1}\hat  m_t^N\leq e^{\theta(t-r)}\hat m_r^N,\spa \forall r\in[0,t].\ee

In particular,
\be{mstar}\hat m_{\tau_N}^N\leq e^{\theta\tau_N}\ee and we only need to take care of $t\geq\tau_N$.\newline

Note that (\ref{Kon15}) can be also written as
\be{Kon15-n}\hat m_t^N=\hat m_{\tau_N}^N+\int_{\tau_N}^t\theta \hat m_s^Nds-\frac{1}{2d}\int_{\tau_N}^tN\hat I_s^Nds.\ee

We extend the definition of $Z_1$ to all particles. To this end,  for any particle $\alpha$, set \[Z_\alpha(t)=1(T_\alpha\in[t,t+\tau_N), B^\beta_{t+\tau_N}-B^\gamma_{t+\tau_N}\in \eth_d),\] where $\beta=(\alpha, 0)$ and $\gamma=(\alpha,1)$ are the children of $\alpha$.
Further, let $\zeta_\alpha(t)$ be the indicator of the event that one of $\alpha$'s children created in $[t,t+\tau_N)$ died at the time of its birth $T_\alpha$ as a result of a collision with another particle. For the original particle $\alpha=\beta_0=1$, we have $\zeta_1(t)=0$, since there are no other particles around. For any $s\geq\tau_N$, let us now  give a lower bound for
\[ \ex{Z_\alpha(s-\tau_N)\given \F_{s-\tau_N}}.\]
Note that given $\F_{s-\tau_N}$, for $s\geq\tau_N$ and $\alpha \in \A(s-\tau_N)$, $Z_\alpha(s-\tau_N)$ is stochastically less than $Z_1$,
since one of  $\alpha$'s children created in $[t,t+\tau_N)$ may be killed  as a result of a collision at the time of its birth $T_\alpha$, which can not happen to $1$'s children due to lack of other particles. However, if we ``return'' the killed children back, then we easily get that, conditionally on
$\F_{s-\tau_N}$, for  $s\geq\tau_N$ and  $\alpha \in \A(s-\tau_N)$, $Z_\alpha(s-\tau_N)+\zeta_{\alpha}(s-\tau_N)$ is stochastically greater than $Z_1$.
Therefore we immediately get, that for $s\geq\tau_N$, $\alpha \in \A(s-\tau_N)$,
\begin{eqnarray*}
 \ex{Z_\alpha(s-\tau_N)\given \F_{s-\tau_N}}
 &\geq&  \ex{Z_1}- \ex{ \zeta_{\alpha}(s-\tau_N) \,\given \F_{s-\tau_N}}.
\end{eqnarray*}
Use this to get, for $s\geq\tau_N$,
\bea{Ieq1}\hat I_s^N&\geq&\ex{\sum_{\alpha,\beta\in \A(s)\atop T_{\alpha\wedge\beta}>s-\tau_N}1(B^\alpha_s-B^\beta_s\in \eth_d)}\\\nn
&=&2 \ex{\sum_{\alpha\in\A(s-\tau_N)}\ex{Z_\alpha(s-\tau_N)\given \F_{s-\tau_N}}}\\\nn
&\geq&2\ex{\sum_{\alpha\in\A(s-\tau_N)}\left(\ex{Z_1}-\ex{\zeta_\alpha(s-\tau_N)\,\given \F_{s-\tau_N} }\right)}
\\\nn&=&2\ex{\hat\xi^N_{s-\tau_N}\ex{Z_1}}-2\ex{\sum_{\alpha\in\A(s-\tau_N)}\zeta_\alpha(s-\tau_N)}
\\\nn&=&2\ex{\hat m_{s-\tau_N}^N}\ex{Z_1}-2\ex{\sum_{\alpha\in\A(s-\tau_N)}\zeta_\alpha(s-\tau_N)}.
\eea
Recall that $\theta<\vartheta$ and set  $\varepsilon=(\vartheta-\theta)/2$. Then by Lemma \ref{L661}, we can choose $N_0>\theta$ sufficiently large
such that for any $N\geq N_0$,
\be{112}
N\ex{Z_1}\geq d\left(\vartheta-\frac{\varepsilon}{4}\right). \ee
Now use the bounds (\ref{112}), (\ref{Ieq1}),  $(\ref{mstar})$
to derive from (\ref{Kon15-n}) that

\be{Ieq2}\hat m_t^N\leq e^{\theta\tau_N}+\theta\int_{\tau_N}^t \hat m_s^Nds-\lb(\vartheta-\frac{\varepsilon}{4}\rb)\int_{\tau_N}^t\hat m_{(s-\tau_N)}^Nds+\frac{1}{d}\int_{\tau_N}^t\Xi_s^Nds,\; \forall N\geq N_0,\ee

where $\Xi_s^N=N\ex{\sum_{\alpha\in\A(s-\tau_N)}\zeta_\alpha(s-\tau_N)}$.

On the other hand for $N\geq N_0>\theta$,

\bea{Xi1}\Xi_s^N&\leq&N \e\left[ \sum_{\alpha\in\A(s-\tau_N)}\p\left(T_\alpha\in[s-\tau_N,s) \,\given \F_{(s-\tau_N)}\right)
\right.
\\\nn&&\left. \times
\frac{1}{2d\tau_N}\ex{\int_{s-\tau_N}^s\sum_{\gamma\in\A(r)}1(B^\alpha_r-B^\gamma_r\in \eth_d)dr
\,\given \F_{(s-\tau_N)}}\right]\\\nn&=&
\frac{N}{2d\tau_N}(2N+\theta)\tau_Ne^{-(2N+\theta)\tau_N}\ex{\int_{s-\tau_N}^s\sum_{\alpha,\gamma\in \A(r)\atop }1(B^\alpha_r-B^\gamma_r\in \eth_d)dr}\\\nn&\leq&\frac{2}{d}N^2\int_{s-\tau_N}^s \hat I_r^Ndr.\eea

Changing the order of integration yields
\bea{Xi2}\int_{\tau_N}^t\int_{(s-\tau_N)_+}^s \hat I_r^Ndrds&\leq&\int_0^t\int_r^{(r+\tau_N)\wedge t} \hat I_r^Ndsdr\\\nn&\leq&\tau_N\int_0^t \hat I_r^N dr\\\nn&\leq&2dN^{-1}\tau_N\lb(1+\int_0^t\theta \hat m_s^Nds\rb),\eea

where the last inequality follows from (\ref{Kon15}).\newline

From (\ref{Xi1}) and (\ref{Xi2}) we get that \[\frac{1}{d}\int_{\tau_N}^t \Xi_s^Nds\leq \frac{4}{d}N\tau_N\lb(1+\int_0^t \hat m_s^Nds\rb),\; \forall N\geq N_0\,.\]

Thus, (\ref{Ieq2}) can be rewritten as
\bea{Ieq3}\hat m_t^N&\leq& e^{10\theta N\tau_N} + \theta\lb(1+\frac{4}{d}N\tau_N\rb)\int_0^t\hat m_s^Nds
\\\nn &&\mbox{}-\lb(\vartheta-\frac{\varepsilon}{4}\rb)\int_{\tau_N}^t\hat m_{s-\tau_N}^Nds,\;\forall N\geq N_0\,,\eea

where without loss of generality we assumed that $N_0$ is sufficiently large so that $e^{10\theta N\tau_N}\geq e^{\theta\tau_N}+\frac{4}{d}N\tau_N\,,$ for any $N\geq N_0$.
Now use  (\ref{Gron1}) to get
\[\hat m_{t-\tau_N}^N\geq \hat m_t^Ne^{-\theta\tau_N}, \spa \forall t\geq 0.\]
Choose $N_1\geq N_0$ large enough so that for any $N\geq N_1$,
\bea{}\nn
(\vartheta-\varepsilon/2)&\leq& (\vartheta-\varepsilon/4)e^{-\theta\tau_N},
\\\nn
 e^{10\theta N\tau_N}+(\vartheta-\varepsilon/2)\tau_Ne^{\theta\tau_N}&\leq&e^{1}.
\eea

 Then we get,

\bea{asd4}\nn \hat m_t^N&\leq& e^{10\theta N\tau_N} + \theta\lb(1+\frac{4}{d}N\tau_N\rb)\int_0^t\hat m_s^Nds-(\vartheta-\varepsilon/2)\int_{\tau_N}^t\hat m_s^Nds\\\nn&=&e^{10 N\theta\tau_N} + (\theta-\vartheta+\varepsilon/2+4d^{-1}\theta N\tau_N)\int_0^t\hat m_s^Nds + (\vartheta-\varepsilon/2)\tau_Ne^{\theta\tau_N}
\\\nn&\leq&
e^1+ (\theta-\vartheta+\varepsilon/2+4d^{-1}\theta N\tau_N)\int_0^t\hat m_s^Nds.\eea

Again, by Gr\"{o}nwall's lemma,
\be{111}
\hat m_t^N\leq e^{(\theta-\vartheta+\varepsilon/2+2\theta N\tau_N)t+1},\spa \forall t>0, N\geq N_1.
\ee

Now choose $N_\theta>N_1$ such that \[2\theta N\tau_N\leq \frac{\vartheta-\theta}{10}, \;\forall N\geq N_{\theta}\,.\]

Then recall that  $\varepsilon=(\vartheta-\theta)/2$ to get from~(\ref{111}) that  \[\hat m_t^N \leq e^{-\frac{13}{20}(\vartheta-\theta)t+1}.\]

Now we choose $t_0>0$, such that \[e^{-\frac{3}{20}(\vartheta-\theta)t_0+1}\leq 1,\]
and hence \[\hat m_t^N\leq e^{-\frac{1}{2}(\vartheta-\theta)t},\spa \forall N\geq N_\theta\,, t>t_0.\]\qed

\end{proof}

\paragraph{Acknowledgements.}
 Both 
authors thank an anonymous referee for the careful reading of the manuscript, and for a number of
useful comments and suggestions that improved the exposition.
\newpage
\addcontentsline{toc}{section}{References}
\bibliographystyle{plain}
\bibliography{Reference}

\begin{thebibliography}{1}

\bibitem{BDS}
M.~Bramson, R.~Durrett, and G.~Swindle.
\newblock Statistical mechanics of crab grass.
\newblock {\em Ann. Prob.}, 17, 444-481 (1989).

\bibitem{MFL}
A.~DeMassi, P.A. Ferrari, and J.L. Lebowitz.
\newblock Reaction-diffusion equations for interacting particle systems.
\newblock {\em Journal of Statistical Physics}, (3), 44, 3, 589-644 (1986).

\bibitem{DN}
R.~Durrett and C.~Neuhauser.
\newblock Particle {S}ystems and {R}eaction-{D}iffusion {E}quations.
\newblock {\em Ann. Prob.}, (1), 2, 1, 289-333 (1994).

\bibitem{DP}
R.~Durrett and E.A. Perkins.
\newblock Rescaled contact processes converge to super-{B}rownian motion for
  d$\geq$2.
\newblock {\em Probability Theory and Related Fields}, 114, 309-399 (1999).

\bibitem{Kat}
M.~Katori.
\newblock Rigorous results for the diffusive contact processes in $d\geq3$.
\newblock {\em J. Phys A: Math. Gen.}, 27, 7327-7341 (1994).

\bibitem{Kon}
N.~Konno.
\newblock Asymptotic behavior of basic contact process with rapid stirring.
\newblock {\em Journal of Theoretical Probability}, 8, 4, 833-876 (1995).

\bibitem{Lgt1}
M.~L. Liggett.
\newblock {\em Interacting Particle Systems}.
\newblock Spinger, (1985).

\bibitem{Lgt2}
M.~L. Liggett.
\newblock {\em Interacting Particle Systems: Contact, Voter and Exclusion
  Processes}.
\newblock Spinger, (1991).

\end{thebibliography}

\end{document}